\documentclass[12pt]{amsart}
\usepackage{latexsym, amssymb, amsmath}

\newtheorem{thm}{Theorem}[section]

\newtheorem{rem}[thm]{Remark}
\theoremstyle{definition}


\newcommand{\Ric}{{\rm Ric}}


\begin{document}

\title{Complete Yamabe solitons with finite total scalar curvature}



\author{Shun Maeta}
\address{Department of Mathematics,
 Shimane University, Nishikawatsu 1060 Matsue, 690-8504, Japan.}
\curraddr{}
\email{shun.maeta@gmail.com~{\em or}~maeta@riko.shimane-u.ac.jp}
\thanks{The author is partially supported by the Grant-in-Aid for Young Scientists(B), No.15K17542, Japan Society for the Promotion of Science, and JSPS Overseas Research Fellowships 2017-2019 No. 70.}

\subjclass[2010]{53C21, 53C25, 53C20}

\date{}

\dedicatory{}

\keywords{Yamabe solitons, Yamabe flow, total scalar curvature}

\commby{}

\begin{abstract}
In this paper, we show that steady or shrinking complete gradient Yamabe solitons with finite total scalar curvature and non-positive Ricci curvature are Ricci flat.
Moreover, under certain pinching condition for Ricci curvature, we show that steady or shrinking complete gradient Yamabe solitons with finite total scalar curvature and non-positive scalar curvature have zero scalar curvature.
\end{abstract}

\maketitle


\bibliographystyle{amsplain}

\section{Introduction}\label{intro} 
 A Riemannian manifold $(M^n,g)$ is called a gradient Yamabe soliton if there exists a smooth function $F$ on $M$ and a constant $\rho\in \mathbb{R}$, such that 
\begin{equation}\label{YS}
(R-\rho)g=\nabla\nabla F,
\end{equation}
where $R$ is the scalar curvature on $M$.
If $\rho>0$, $\rho=0$ or $\rho<0$, Yamabe solitons are called shrinking, steady or expanding (cf. \cite{DS13}).
By scaling the metric, we can assume $\rho=1,0,-1$, respectively. If the potential function $F$ is constant, then Yamabe solitons are called trivial.
Yamabe solitons are special solutions of the Yamabe flow which was introduced by R. Hamilton \cite{Hamilton89}.
Since it is well known that any compact Yamabe soliton is trivial (cf.~\cite{Hsu12}), 
it is important to study complete Yamabe solitons.
 P. Daskalopoulos and N. Sesum showed that all locally conformally flat complete gradient Yamabe solitons with positive sectional curvature have to be rotationally symmetric (cf.~\cite{DS13}).
 There are many interesting studies for Yamabe soliton and CR Yamabe solitons (see for example Pak Tung Ho's paper \cite{Ho1}, \cite{Ho2}.)
Under certain integrality conditions, L. Ma and V. Miquel gave some conditions for the scalar curvature (cf.~\cite{MM12}).
L. Ma and L. Cheng's paper \cite{MC11} is also important.
Recently, H.-D. Cao, X. Sun and Y. Zhang gave a classification theorem for complete steady gradient Yamabe solitons under the Ricci curvature is positive (cf. \cite{CSZ12}).
Therefore, in this paper, under the Ricci curvature is non-positive, 
we show the following.

\begin{thm}\label{main}
Let $(M,g)$ be a steady or shrinking complete gradient Yamabe soliton with non-positive Ricci curvature.
If the scalar curvature $R\in L^p(M)$ for some $0<p<\infty$,
then $M$ is Ricci flat.
\end{thm}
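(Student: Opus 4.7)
The goal is to prove $R\equiv 0$, whence $\Ric\equiv 0$ follows at once since the hypothesis $\Ric\le 0$ combined with $\mathrm{tr}(\Ric)=R=0$ forces every eigenvalue of $\Ric$ to vanish. I would begin by extracting the standard differential consequences of \eqref{YS}. Tracing gives $\Delta F=n(R-\rho)$; taking the divergence of \eqref{YS} and applying the Ricci commutation formula $\nabla^j\nabla_j\nabla_iF=\nabla_i\Delta F+R_{ij}\nabla^jF$ yields
\[
\Ric(\nabla F)=-(n-1)\nabla R.
\]
A second divergence, combined with the twice-contracted Bianchi identity $\mathrm{div}\,\Ric=\tfrac12\nabla R$, produces the scalar curvature equation
\[
(n-1)\Delta R+\tfrac12\langle\nabla R,\nabla F\rangle+R(R-\rho)=0.
\]

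Next, set $u:=-R$. Since $\Ric\le 0$ gives $R\le 0$ by taking traces, $u\ge 0$. Contracting the first identity with $\nabla F$ gives $\Ric(\nabla F,\nabla F)=-(n-1)\langle\nabla R,\nabla F\rangle$, so the hypothesis $\Ric\le 0$ forces $\langle\nabla u,\nabla F\rangle\le 0$. Substituting into the scalar curvature equation,
\[
(n-1)\Delta u=u^2+\rho u-\tfrac12\langle\nabla u,\nabla F\rangle\ge u^2,
\]
using $\rho\in\{0,1\}$ and $u\ge 0$. In particular $u$ is a non-negative subharmonic function satisfying the superlinear semilinear inequality $\Delta u\ge u^2/(n-1)$.

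The analytic heart of the argument is a cutoff estimate using $u\in L^p(M)$. For $p>1$, take a Lipschitz cutoff $\eta\in C_c^\infty(M)$ with $0\le\eta\le 1$, $\eta\equiv 1$ on $B_r$, $\mathrm{supp}\,\eta\subset B_{2r}$, and $|\nabla\eta|\le C/r$. Multiplying $\Delta u\ge u^2/(n-1)$ by $\eta^2 u^{p-1}$, integrating by parts, and using Young's inequality to absorb the cross term $\eta u^{p-1}\langle\nabla\eta,\nabla u\rangle$ into the favorable gradient term $(p-1)\int\eta^2 u^{p-2}|\nabla u|^2$ produces
\[
\int_M\eta^2 u^{p+1}\le\frac{C(n,p)}{r^2}\int_{B_{2r}\setminus B_r}u^p.
\]
Because $u\in L^p(M)$, the right-hand side tends to zero as $r\to\infty$, forcing $u\equiv 0$, hence $R\equiv 0$, hence $\Ric\equiv 0$.

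The main obstacle will be the range $0<p\le 1$, where the factor $1/(p-1)$ from the absorption step diverges and the weight $u^{p-2}$ degenerates at zeros of $u$. For $p=1$ one can instead test $\Delta u\ge u^2/(n-1)$ against $\eta^2$ and transfer both derivatives onto the cutoff, giving $\int_{B_r}u^2\le (C/r^2)\|u\|_{L^1(B_{2r})}$, which again vanishes in the limit. For $0<p<1$ my approach would be to first establish a local $L^\infty$ bound for $u$ via a Moser or Keller--Osserman type iteration built from the superlinear inequality $\Delta u\ge u^2/(n-1)$, after which $u\in L^p\cap L^\infty$ automatically upgrades to $u\in L^q$ for every $q\ge p$, reducing the problem to the case $p>1$ already handled.
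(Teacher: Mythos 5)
Your derivation of the key inequality is sound and, for $p>1$, your cutoff argument (essentially Yau's $L^p$ Liouville argument applied to $u=-R$, sharpened by the superlinear term $u^2/(n-1)$ so that $u\equiv0$ rather than merely constant) is a correct and in fact cleaner route than the paper's. The gap is in the range $0<p\le 1$, which the theorem explicitly includes. For $p=1$ you propose to transfer both derivatives onto the cutoff; that needs $|\Delta(\eta^2)|\le C/r^2$, and cutoffs with controlled Laplacian are \emph{not} available here: the hypothesis is $\Ric\le 0$, an upper bound, while Laplacian comparison (and hence the standard construction of such cutoffs) requires a Ricci \emph{lower} bound. For $0<p<1$ you propose a Moser or Keller--Osserman bound to get $u\in L^\infty(M)$ and then upgrade $L^p$ to $L^q$; but to make that upgrade you need a \emph{global} sup bound (local boundedness is automatic since $u$ is smooth, and gives nothing), and both Moser iteration and the Keller--Osserman barrier produce constants depending on local geometry (Sobolev constants, $\Delta\,d$ estimates), which degenerate without a Ricci lower bound. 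Since the theorem allows curvature tending to $-\infty$ and collapsing volume at infinity, no uniform $L^\infty$ bound follows from $\Delta u\ge u^2/(n-1)$ alone, so the reduction to $p>1$ is unjustified.

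The paper closes exactly this hole by a different absorption: it tests the identity for $\Delta f$, $f=R^2$, against $\eta^b f^a$ and, after integrating by parts, splits the cutoff error $\int \eta^{b-2}f^{a+1}|\nabla\eta|^2$ by Young's inequality with exponents tuned by parameters $a,d$ so that one piece is absorbed into the \emph{zeroth-order superlinear} term $-\tfrac{2}{n-1}\int\eta^b f^{a+1}R=\tfrac{2}{n-1}\int\eta^b f^{a+\frac32}$ (this uses $R\le0$), and the leftover piece is exactly $C(a,d,n)\,r^{-\beta}\int_M f^{p/2}$ with $\beta>0$ and $p=2(a+1-d)\tfrac{a+\frac32}{a+\frac32-d}$ arbitrary in $(0,\infty)$. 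Because the error is absorbed into the cubic term rather than into the gradient term, no restriction $p>1$ appears and no cutoff Laplacian or curvature lower bound is needed. Your argument can be repaired along the same lines: test $\Delta u\ge u^2/(n-1)$ against $\eta^{m}u^{q}$ with $q\ge1$, absorb the cross term into $q\int\eta^m u^{q-1}|\nabla u|^2$, and then split $\int\eta^{m-2}u^{q+1}|\nabla\eta|^2$ so that part of it is dominated by $\varepsilon\int\eta^m u^{q+2}$ and the remainder is $Cr^{-\beta}\int u^{p}$; choosing $q$ and the Young exponents appropriately covers every $p\in(0,\infty)$. As written, however, your proof establishes the theorem only for $p>1$.
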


Under the scalar curvature is non-positive and certain pinching condition for Ricci curvature of $M$, we show the following.

\begin{thm}\label{main2}
Let $(M,g)$ be a steady or shrinking complete gradient Yamabe soliton with non-positive scalar curvature and $\Ric\geq \varphi Rg$ for some non-negative function $\varphi :M\rightarrow \mathbb{R}^+\cup\{0\}$.
If the scalar curvature $R\in L^p(M)$ for some $0<p<\infty$,
then the scalar curvature is $0$.
\end{thm}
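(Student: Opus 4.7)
The plan is to follow the integration-by-parts strategy of Theorem~\ref{main}. First, I would record the two identities valid on any gradient Yamabe soliton: taking the divergence of (\ref{YS}) and applying the contracted second Bianchi identity gives
\begin{equation*}
\Ric(\nabla F) = -(n-1)\nabla R,
\end{equation*}
and taking one more divergence yields
\begin{equation*}
(n-1)\Delta R + \tfrac{1}{2}\langle\nabla F,\nabla R\rangle + R(R-\rho) = 0.
\end{equation*}
Tracing (\ref{YS}) also gives $\Delta F = n(R-\rho)$. Writing $u := -R \geq 0$, the scalar identity becomes
\begin{equation*}
(n-1)\Delta u + \tfrac{1}{2}\langle\nabla F,\nabla u\rangle = u(u+\rho),
\end{equation*}
whose right-hand side is non-negative. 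The pinching $\Ric \geq \varphi R g$ combined with the first identity gives the key inequality
\begin{equation*}
(n-1)\langle\nabla u,\nabla F\rangle = \Ric(\nabla F,\nabla F) \geq -\varphi u\,|\nabla F|^{2},
\end{equation*}
bounding the drift term from below.

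Next I would multiply the identity for $u$ by $u^{p-1}\eta^{2}$, where $\eta$ is a standard distance cut-off equal to $1$ on $B_{r}(x_{0})$, supported in $B_{2r}(x_{0})$ with $|\nabla\eta|\leq C/r$, and integrate over $M$. Integrating the Laplacian term by parts produces the coercive term $(n-1)(p-1)\int u^{p-2}|\nabla u|^{2}\eta^{2}$ on the left-hand side plus a $\nabla u\cdot\nabla\eta$ cross term; integrating the drift term by parts and substituting $\Delta F = -n(u+\rho)$ contributes $\tfrac{n}{2p}\int u^{p}(u+\rho)\eta^{2}$ to the right-hand side. For $p>n/2$, the resulting net coefficient $(1-\tfrac{n}{2p})$ on $\int u^{p}(u+\rho)\eta^{2}$ is strictly positive, and absorbing the $\nabla u$ cross term via Cauchy--Schwarz produces
\begin{equation*}
c_{p}\int_{M} u^{p+1}\eta^{2} + \rho\, c_{p}\int_{M} u^{p}\eta^{2} \leq \frac{C}{r^{2}}\int_{B_{2r}\setminus B_{r}} u^{p} + \frac{C}{r}\int_{B_{2r}\setminus B_{r}} u^{p}|\nabla F|.
\end{equation*}

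The main obstacle is controlling the final integral, which features $|\nabla F|$. For this I would invoke the Kato-type identity $|\nabla|\nabla F|| = |R-\rho| = u+\rho$, which follows directly from $\tfrac{1}{2}\nabla|\nabla F|^{2} = (R-\rho)\nabla F$. Integrating this gradient bound along minimising geodesics from a fixed base point, together with a local $L^{\infty}$-bound on $u$ derived from the equation for $u$ and the $L^{p}$-hypothesis, gives at most linear growth $|\nabla F| = O(r)$ on $B_{2r}(x_{0})$. The annulus integrals on the right are then bounded by constants times $\int_{B_{2r}\setminus B_{r}}u^{p}$, which vanishes as $r\to\infty$ by the global $L^{p}$-integrability of $u$. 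Passing to the limit forces $\int_{M} u^{p+1} = 0$, whence $u\equiv 0$ and $R\equiv 0$. The remaining range $0<p\leq n/2$ is reduced to the previous one by a standard bootstrap that upgrades $L^{p}$-control to $L^{p'}$-control for some $p'>n/2$, using the non-negativity of $u(u+\rho)$ on the right-hand side of the identity.
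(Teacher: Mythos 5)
Your overall strategy (multiply the traced soliton identity by a power of the scalar curvature times a cut-off and integrate) is in the spirit of the paper, but the specific route you take runs into a genuine gap at the step where you integrate the drift term $\tfrac12\langle\nabla F,\nabla u\rangle u^{p-1}\eta^2$ by parts. That integration by parts produces the boundary term $\tfrac1p\int u^p\eta\langle\nabla\eta,\nabla F\rangle$, and everything then hinges on your claim that $|\nabla F|=O(r)$. The Kato-type identity $|\nabla|\nabla F||=|R-\rho|$ is correct, but to integrate it along geodesics and get linear growth you need $\sup_{B_{2r}}|R|$ bounded \emph{uniformly in $r$}, i.e.\ a global $L^\infty$ bound on $R$. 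A ``local $L^\infty$ bound'' is just continuity and does not suffice, and the proposed derivation of a uniform bound from the equation plus $R\in L^p$ is circular: a Moser/mean-value argument for the drift operator $(n-1)\Delta+\tfrac12\langle\nabla F,\nabla\cdot\rangle$ requires control of $|\nabla F|$ (and of local Sobolev constants, which are not available here since no Ricci lower bound or volume non-collapsing is assumed). Two further symptoms that the argument is not yet a proof: (i) the pinching hypothesis $\Ric\geq\varphi Rg$ is converted into the inequality $(n-1)\langle\nabla u,\nabla F\rangle\geq-\varphi u|\nabla F|^2$ and then never used anywhere downstream, so as written you would be proving the theorem without its key hypothesis; (ii) the argument only closes for $p>n/2$, and the ``standard bootstrap'' for $0<p\leq n/2$ is not standard here --- the bootstrap inequality you would need is exactly the one that fails for small $p$ because the coefficient $1-\tfrac{n}{2p}$ changes sign.

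The paper avoids all of this by never integrating against $\nabla F$. Its key move is to substitute the first-order identity $(n-1)\langle\nabla R,\nabla F\rangle=-\Ric(\nabla F,\nabla F)$ \emph{into} the traced equation before testing, yielding the drift-free identity $\Delta R=\tfrac{1}{2(n-1)^2}\Ric(\nabla F,\nabla F)-\tfrac{1}{n-1}R(R-\rho)$; one then tests $\Delta(R^2)$ against $\eta^b f^a$ with $f=R^2$. No derivative of $F$ ever lands on the cut-off, so no growth control on $F$ is needed; the pinching hypothesis is used precisely to give a sign to the term $R\,\Ric(\nabla F,\nabla F)$; and the free exponents $a,d$ in the Young-inequality step are tuned so that the cut-off error is $C\,r^{-\lambda}\int_M|R|^p$ for \emph{any} prescribed $p\in(0,\infty)$, which is why the theorem needs no restriction $p>n/2$. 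To repair your argument you would either have to prove the global bound on $R$ and linear growth of $|\nabla F|$ independently (nontrivial and not attempted), or switch to the paper's substitution, which eliminates the problematic boundary term at the source.
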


\begin{rem}
If the trace free Ricci curvature $\overline{Ric}=\Ric-\frac{1}{n}Rg$ is non-negative, then it satisfies the assumption $\Ric\geq \varphi Rg$ for $\varphi=\frac{1}{n}$.
\end{rem}
\section{Proof of Theorem $\ref{main}$ and $\ref{main2}$}
In this section, we give a proof of Theorem $\ref{main}$ and Theorem $\ref{main2}$.
\begin{proof}[Proof of Theorem $\ref{main}$]
In general, we have
\begin{equation}\label{p.1}
\Delta {\nabla}_iF={\nabla}_i\Delta F+\Ric_{ij}{\nabla}_jF,
\end{equation}
where $\nabla$ is the Levi-Civita connection, $\Delta$ is the Laplacian, and $\Ric$ is the Ricci tensor on $M$, respectively.
Substituting 
\begin{equation*}
\Delta {\nabla}_iF={\nabla}_k{\nabla}_k{\nabla}_iF={\nabla}_k((R-\rho)g_{ki})={\nabla}_iR,
\end{equation*}
and
\begin{equation*}
{\nabla}_i\Delta F={\nabla}_i(n(R-\rho))=n{\nabla}_iR,
\end{equation*}
into $(\ref{p.1})$, we have
\begin{equation}\label{p.2}
(n-1)\nabla_iR+\Ric_{il}\nabla_lF=0.
\end{equation}
Thus we have 
\begin{equation}\label{p.3}
(n-1)g(\nabla R,\nabla F)=-\Ric(\nabla F,\nabla F).
\end{equation}
On the other hand, by $(\ref{p.2})$ and the contracted second Bianchi identity, 
\begin{equation}\label{p.4}
(n-1)\Delta R+\frac{1}{2}g(\nabla R, \nabla F)+R(R-\rho)=0.
\end{equation}
Combining $(\ref{p.3})$ and $(\ref{p.4})$, we obtain
\begin{equation}
\Delta R=\frac{1}{2(n-1)^2}\Ric(\nabla F,\nabla F)-\frac{1}{n-1}R(R-\rho).
\end{equation}
Thus we have 
\begin{align}\label{KEY}
\Delta R^2
=&~2R\Delta R +2|\nabla R|^2\\\notag
=&~ \frac{1}{(n-1)^2}R\,\Ric(\nabla F,\nabla F)-\frac{2}{n-1}R^2(R-\rho)\\\smallskip\notag
&~+2|\nabla R|^2.\notag
\end{align}

Set $f=R^2$. 
For a fixed point $p_0\in M$, and for every $0<r<\infty,$
 we first take a cut off function $\eta$ on $M$ satisfying that 
 \begin{equation}
\left\{
 \begin{aligned}
&0\leq\eta(p)\leq1\ \ \ (p\in M),\\
&\eta(p)=1\ \ \ \ \ \ \ \ \ (p\in B_r(p_0)),\\
&\eta(p)=0\ \ \ \ \ \ \ \ \ (p\not\in B_{2r}(p_0)),\\
&|\nabla\eta|\leq\frac{C}{r}\ \ \ \ \ \ \ (p\in M),\ \ \ \text{for some constant $C$ independent of $r$},
\end{aligned} 
\right.
\end{equation}
where $B_r(p_0)$  are the balls centered at a fixed point $p_0\in M$ with radius $r$.
Let $a$ be a positive constant to be determined later.
 Let $b=\frac{(a+\frac{3}{2})(a+3-d)}{a+\frac{3}{2}-d}$, where $d<a+1$ is a positive constant. 

By $(\ref{KEY})$, we have
\begin{equation}\label{1}
\begin{aligned}
-\int_M & g( \nabla (\eta ^b  f^a), \nabla f) dv_g\\
=&\int_M \eta^b f^a \Delta f dv_g\\
=& \int_M \frac{1}{(n-1)^2}\eta^b f^a \,R\,\Ric(\nabla F,\nabla F) dv_g 
-\int_M \frac{2}{n-1}\eta^b f^{a+1}(R-\rho)dv_g \\
&+2\int_M \eta^b f^a|\nabla R|^2dv_g.
\end{aligned}
\end{equation}
On the other hand,
\begin{equation}\label{2}
\begin{aligned}
-\int_M & g(\nabla (\eta ^b  f^a), \nabla f) dv_g\\
&= -2 b \sum_{i=1}^m\int_M \eta^{b-1} (e_i \eta) f^a \,R\,(e_i R) dv_g 
-4a \int_M \eta^{b} f^{a} |\nabla R|^2dv_g.
\end{aligned}
\end{equation}
From $(\ref{1})$ and $(\ref{2})$, we obtain

\begin{equation}\label{3}
\begin{aligned}
& \int_M \frac{1}{(n-1)^2}\eta^b f^a \,R\,\Ric(\nabla F,\nabla F) dv_g 
-\int_M \frac{2}{n-1}\eta^b f^{a+1}(R-\rho)dv_g \\
&+(2+4a)\int_M \eta^b f^a|\nabla R|^2dv_g\\
=&-2 b \sum_{i=1}^m\int_M \eta^{b-1} (e_i \eta) f^a \,R\,(e_i R) dv_g \\
=&-2  \sum_{i=1}^m\int_M (\eta^{\frac{b}{2}}f^{\frac{a}{2}} (e_i R))(b \eta^{\frac{b-2}{2}}f^\frac{a}{2} \,R\,(e_i \eta) dv_g \\
\leq& \int_M \eta^b f^a |\nabla R|^2dv_g+\int_M b^2 \eta^{b-2}f^{a+1} |\nabla \eta|^2 dv_g,
\end{aligned}
\end{equation}
where the last inequality follows from Young's inequality.
By using Young's inequality again, we have

\begin{equation}\label{4}
\begin{aligned}
&\int_M b^2 \eta^{b-2}f^{a+1} |\nabla \eta|^2 dv_g\\
 =& \int_M \eta^c b^2 \eta^{b-2-c} f^{d} f^{a+1-d} |\nabla \eta|^2 dv_g \\
 \leq & \frac{2}{n-1}\int_M \eta ^{b} f^{a+\frac{3}{2}} dv_g\\
& \ \ \ +C(a,d,n) \int_M \eta ^{(b-2-c)\frac{a+\frac{3}{2}}{a+\frac{3}{2}-d}} f^{(a+1-d)\frac{a+\frac{3}{2}}{a+\frac{3}{2}-d}} |\nabla\eta|^{2\frac{a+\frac{3}{2}}{a+\frac{3}{2}-d}} dv_g\\
 = & \frac{2}{n-1}\int_M \eta ^{b} f^{a+1}f^{\frac{1}{2}} ~dv_g\\
& \ \ \ +C(a,d,n) \int_M \eta ^{a+1-d} f^{(a+1-d)\frac{a+\frac{3}{2}}{a+\frac{3}{2}-d}} |\nabla\eta|^{2\frac{a+\frac{3}{2}}{a+\frac{3}{2}-d}} dv_g,
\end{aligned}
\end{equation}
where $c=\frac{d(a+3-d)}{a+\frac{3}{2}-d}$ and $C(a,d,n)$ is a constant depending only on $a$, $d$ and $n$.
Combining $(\ref{3})$ and $(\ref{4})$, we obtain
\begin{equation}\label{5}
\begin{aligned}
& \int_M \frac{1}{(n-1)^2}\eta^b f^a \,R\,\Ric(\nabla F,\nabla F) dv_g 
-\int_M \frac{2}{n-1}\eta^b f^{a+1}R\,dv_g \\
&+\int_M \frac{2\rho}{n-1}\eta^b f^{a+1}dv_g +(1+4a)\int_M \eta^b f^a|\nabla R|^2dv_g\\
\leq& \frac{2}{n-1}\int_M \eta ^{b} f^{a+1}f^{\frac{1}{2}} ~dv_g+C(a,d,n) \int_M \eta ^{a+1-d} f^{(a+1-d)\frac{a+\frac{3}{2}}{a+\frac{3}{2}-d}} |\nabla\eta|^{2\frac{a+\frac{3}{2}}{a+\frac{3}{2}-d}} dv_g\\
 \leq &
\frac{2}{n-1}\int_M \eta ^{b} f^{a+1}f^{\frac{1}{2}} ~dv_g
+C(a,d,n) \int_M f^{(a+1-d)\frac{a+\frac{3}{2}}{a+\frac{3}{2}-d}} \left(\frac{1}{r}\right)^{2\frac{a+\frac{3}{2}}{a+\frac{3}{2}-d}} dv_g\\
= &
\frac{2}{n-1}\int_M \eta ^{b} f^{a+1}f^{\frac{1}{2}} ~dv_g
+C(a,d,n) \int_M f^{\frac{p}{2}} \left(\frac{1}{r}\right)^{2\frac{a+\frac{3}{2}}{a+\frac{3}{2}-d}} dv_g,
\end{aligned}
\end{equation}
 where we chose $a$ and $d$ such that $p=2{(a+1-d)\frac{a+\frac{3}{2}}{a+\frac{3}{2}-d}}$.
 Since $R\leq0$, we have
 \begin{equation}\label{6}
\begin{aligned}
& \int_M \frac{1}{(n-1)^2}\eta^b f^a \,R\,\Ric(\nabla F,\nabla F) dv_g \\
&+\int_M \frac{2\rho}{n-1}\eta^b f^{a+1}dv_g +(1+4a)\int_M \eta^b f^a|\nabla R|^2dv_g\\
\leq &~
C(a,d,n) \int_M f^{\frac{p}{2}}  \left(\frac{1}{r}\right)^{2\frac{a+\frac{3}{2}}{a+\frac{3}{2}-d}} dv_g.
\end{aligned}
\end{equation}
  Since $0<d<a+1$, $0<p=2(a+1-d)\frac{a+2}{a+2-d}<2(a+1)$.
By the assumption $\int_M f^{\frac{p}{2}} dv_g<\infty$ $(0<p<\infty)$, letting $r\nearrow \infty$ in $(\ref{6})$,
  the right hand side of $(\ref{6})$ goes to zero and the left hand side of $(\ref{6})$ goes to 
\begin{align*}
& \int_M \frac{1}{(n-1)^2} f^a \,R\,\Ric(\nabla F,\nabla F) dv_g \\
&+\int_M \frac{2\rho}{n-1} f^{a+1}dv_g +(1+4a)\int_M f^a|\nabla R|^2dv_g,
\end{align*}
   since $\eta=1$ on $B_r(p_0)$.
 Thus, we have
\begin{equation}\label{7}
\begin{aligned}
& \int_M \frac{1}{(n-1)^2} f^a \,R\,\Ric(\nabla F,\nabla F) dv_g \\
&+\int_M \frac{2\rho}{n-1} f^{a+1}dv_g +(1+4a)\int_M f^a|\nabla R|^2dv_g\leq0.
\end{aligned}
\end{equation}
 Thus, we obtain $R$ is constant. By $(\ref{p.4})$, $R=0$ or $R=\rho.$

 Case~$1$: $M$ is a shrinking Yamabe soliton $(\rho=1)$. By $(\ref{7})$, we have $R=0$.
 
 Case~$2$: $M$ is a steady Yamabe soliton $(\rho=0)$. Obviously $R=0.$
 
  Therefore, we obtain $R=0.$ From this and the assumption that the Ricci curvature is non-positive, we obtain the Ricci curvature is $0$.
\end{proof}

\quad\\
By the similar argument as in the proof of Theorem $\ref{main}$, we show Theorem $\ref{main2}$.

\begin{proof}[Proof of Theorem $\ref{main2}$]
By the same argument as in the proof of Theorem $\ref{main},$ we have
 \begin{equation}\label{r.6}
\begin{aligned}
& \int_M \frac{1}{(n-1)^2}\eta^b f^a \,R\,\Ric(\nabla F,\nabla F) dv_g \\
&+\int_M \frac{2\rho}{n-1}\eta^b f^{a+1}dv_g +(1+4a)\int_M \eta^b f^a|\nabla R|^2dv_g\\
\leq &~
C(a,d,n) \int_M f^{\frac{p}{2}}  \left(\frac{1}{r}\right)^{2\frac{a+\frac{3}{2}}{a+\frac{3}{2}-d}} dv_g.
\end{aligned}
\end{equation}
Since the assumption $\Ric\geq \varphi Rg$, we have
 \begin{equation}\label{r.7}
\begin{aligned}
& \int_M \frac{1}{(n-1)^2}\eta^b \varphi f^{a+1}|\nabla F|^2 dv_g \\
&+\int_M \frac{2\rho}{n-1}\eta^b f^{a+1}dv_g +(1+4a)\int_M \eta^b f^a|\nabla R|^2dv_g\\
\leq &~
C(a,d,n) \int_M f^{\frac{p}{2}}  \left(\frac{1}{r}\right)^{2\frac{a+\frac{3}{2}}{a+\frac{3}{2}-d}} dv_g.
\end{aligned}
\end{equation}
By the assumption $\int_M f^{\frac{p}{2}} dv_g<\infty$ $(0<p<\infty)$, letting $r\nearrow \infty$ in $(\ref{r.7})$,
  the right hand side of $(\ref{r.7})$ goes to zero and the left hand side of $(\ref{r.7})$ goes to 
\begin{align*}
& \int_M \frac{1}{(n-1)^2} \varphi f^{a+1}|\nabla F|^2 dv_g \\
&+\int_M \frac{2\rho}{n-1} f^{a+1}dv_g +(1+4a)\int_M  f^a|\nabla R|^2dv_g,
\end{align*}
   since $\eta=1$ on $B_r(p_0)$.
 Thus, we have
\begin{equation}\label{r.8}
\begin{aligned}
&\int_M \frac{1}{(n-1)^2} \varphi f^{a+1}|\nabla F|^2 dv_g \\
&+\int_M \frac{2\rho}{n-1} f^{a+1}dv_g +(1+4a)\int_M  f^a|\nabla R|^2dv_g\leq 0.\
\end{aligned}
\end{equation}
 Therefore, we obtain $R$ is constant. By $(\ref{p.4})$, $R=0$ or $R=\rho.$

 Case~$1$: $M$ is a shrinking Yamabe soliton $(\rho=1)$. By $(\ref{r.8})$, we have $R=0$.
 
 Case~$2$: $M$ is a steady Yamabe soliton $(\rho=0)$. Obviously $R=0.$
 
\end{proof}

\proof[Acknowledgements]
The author would like to thank Professor Pak Tung Ho and Professor Li Ma for useful comments.

\bibliographystyle{amsbook}

\begin{thebibliography}{99}  

\bibitem{CSZ12}
Cao, H.-D., Sun, X. and Zhang, Y.,
{\em On the structure of gradient Yamabe solitons,}
Math. Res. Lett. (2012) {\bf 19}, 767--774.

\bibitem{CLN06}
Chow, B., Lu P. and  Ni L.,
{\em Hamilton's Ricci Flow},
Graduate Studies in Mathematics {\bf 77},  Amer. Math. Soc., (2006). 

\bibitem{DS13}
Daskalopoulos, P. and Sesum, N.,
\textit{The classification of locally conformally flat Yamabe solitons,}
Adv. Math. 240, (2013) 346--369.

\bibitem{Hamilton89}
Hamilton, R., 
{\em Lectures on geometric flows},
(1989), unpublished.

\bibitem{Ho1}
Ho, P. T.,
{\em A note on compact CR Yamabe solitons},
J. Geom. Phys. 94, (2015) 32-34.

\bibitem{Ho2}
Ho, P. T.,
{\em Soliton to the fractional Yamabe flow},
Nonlinear Anal. 139, (2016)  211-217.

\bibitem{Hsu12}
Hsu, S. Y.,
\textit{A note on compact gradient Yamabe solitons,}
J. Math. Anal. Appl. 388 (2), (2012) 725--726.

\bibitem{MC11}
Ma, L. and Cheng, L.,
{\em Properties of complete non-compact Yamabe solitons},
 Ann. Global Anal. Geom. 40, (2011) 379-387 .

\bibitem{MM12}
Ma, L. and Miquel, V.,
\textit{Remarks on scalar curvature of Yamabe solitons,}
Ann. Global Anal. Geom. 42,  (2012) 195-205.


\end{thebibliography}

\end{document}